\theoremstyle{theorem}
\newtheorem{theorem}{Theorem}
\newtheorem{proposition}{Proposition}
\theoremstyle{remark}
\newtheorem{remark}[theorem]{Remark}
\title{\textsf{Catastrophic failure and cumulative damage models involving two types of extended exponential distributions}}
\author{
  Hiroaki Mohri\\
  Faculty of Commerce,
  Waseda University\\
  Shinjuku-ku, Tokyo 169-8050, Japan\\
  \texttt{mohri@waseda.jp}\\
	\And
	Jun-ichi Takeshita \\
        Research Institute of Science for Safety and Sustainability,\\
        National Institute of Advanced Industrial Science and Technology (AIST)\\
        Tsukuba, Ibaraki 305-8569, Japan\\
        \texttt{jun-takeshita@aist.go.jp}\\
}
\begin{document}
\maketitle

\begin{abstract}
The present study supposes a single unit and investigates cumulative damage and catastrophic failure models for the unit, in situations where the interarrival times between the shocks, and the magnitudes of the shocks, involve two different stochastic processes.
  In order to consider two essentially different stochastic processes, integer gamma and Weibull distributions are treated as distributions with two parameters and extensions of exponential distributions.
  With respect to the cumulative damage models, under the assumption that the interarrival times between shocks follow exponential distributions, the case in which the magnitudes of the shocks follow integer gamma distributions is analyzed.
  With respect to the catastrophic failure models, the respective cases in which the interarrival times between shocks follow integer gamma and Weibull distributions are discussed.
  Finally, the study provides some characteristic values for reliability in such models.
\end{abstract}

\keywords{Reliability \and Two types of shocks \and first passage time to failure \and integer gamma distributions \and Weibull distributions}

\section{Introduction}
\label{sec:introduction}

In the literature of reliability theory, numerous studies have concentrated on failure models; see e.g., Barlow and Proschan~\cite{BP1996}, Nakagawa~\cite{Na2007}.
In some cases, it is enough to only know when the failure events occur, but in other cases it is necessary to know the timing of, and damage caused by, the events.
In these latter cases, we utilize the models for two-phase failure analysis; in the first phase, focusing solely the stochastic process involved in the failure events, and in the second, considering both the stochastic process and the probabilistic distribution of the magnitude of damage in the events.

As a simple concrete example, consider the case of house collapse due to an earthquake.
If the house collapses due to a massive earthquake, and we thereby need not consider additional shocks from further quakes, first-phase analysis alone may be sufficient.
In this case, a shock of sufficient magnitude is applied to the unit (= house) such that it immediately fails. However, should the house collapse as a result of successive lesser earthquakes, the damage magnitudes of the respective earthquakes are additive.
When the shocks are additive and the total shock magnitude exceeds a given number $K>0$, the unit fails. This case illustrates a cumulative damage model, which has been employed in numerous studies, including that of Cox~\cite{Cox1962}.

There are two types of earthquake: oceanic trench and inland.
Each type of shock involves a different stochastic process, and the respective magnitudes of each shock type show different probabilistic distributions.
Given the frequent occurrence of both types of failure event, we often cannot apply previous studies to analyze realistic situations.
Note that since exponential distributions have the reproductive property, two exponential distributions are essentially equivalent to one exponential distribution.

The present study supposes that both types of shock are applied to a single unit, and deals with integer gamma distributions and Weibull distributions, which include exponential distributions as special cases. Even if the shock types differ, we investigate both the catastrophic (i.e., no need to consider the shock magnitude) and cumulative shock types, and summarize our models as follows.
\begin{description}
\item[Model 1] (catastrophic shock): When a shock is applied to a unit, it immediately fails.
\item[Model 2] (cumulative shock): When the total magnitude for a unit exceeds a given number $K>0$, it fails. Note that the total magnitude for the unit is the sum of all shock types.
\end{description}
For Model 1, the following will be derived: (i) The probability that the unit does not fail at time t; (ii) the first passage of time to failure (time to initial failure, hereinafter referred to as FPTF), and its mean.
For Model 2, the following will be derived: (i) The distribution function of the sum of damage and its mean; (ii) the FPTF, which is equal to the probability of the total shock being greater than $K$.

The paper is organized as follows.
Section 2 summarizes the basic notions and assumptions of the study.
Section 3 describes the results for both Model 1 and Model 2 in the case of general distributions.
Section 4 deals with Model 2 when the magnitudes of the events follow integer gamma distributions, under the assumption that the interarrival times between shocks follow exponential distributions.
Section 5 deals with Model 1 when the interarrival times between shocks follow integer gamma distributions and Weibull distributions.
Section 6 concludes the study.

\section{Preliminaries}
\label{ec:pre}

The study employed the following notions and assumptions.
\begin{itemize}
 \item A unit has generation intervals determined by stochastic processes 1 and 2, say, $X_i$ and $Y_i$ $(i \in \mathbb{N})$, respectively.
       It is assumed that $X_i$ and $Y_i$ are independent and identically distributed random variables, and have identical general distributions of $F_{X_i}(t):= P(X_i \leq t)$ and $F_{Y_i}(t):= P(Y_i \leq t)$, respectively.
 \item The respective weights of the $i$-th event, based on stochastic processes 1 and 2, are denoted as $W_i$ and $V_i$, respectively.
       It is assumed that $W_i$ and $V_i$ are independent and identically distributed random variables, and have identical general distributions of $G_{W_i}(x):= P(W_i \leq x)$ and $G_{V_i}(x):= P(V_i \leq x)$, respectively.
 \item The numbers of events in $[0, t] \ (t>0)$, based on stochastic processes 1 and 2, are denoted as $N_1(t)$ and $N_2(t)$, respectively.
       It is assumed that $N_1(t)$ and $N_2(t)$ are independent.
 \item $\mathcal{W}(t) := \sum_{i=1}^{N_1(t)} W_i$, $\mathcal{V}(t) := \sum_{i=1}^{N_2(t)} V_i$, and $U(t):= \mathcal{W}(t) + \mathcal{V}(t)$.
 \item For any distribution function $H(t, \cdot)$, its Laplace-Stieltjes (LS) transform with respect to $t$ is denoted as $\mathcal{L}_t(H(\cdot))[s]$, and its inverse LS transform is denoted as $\mathcal{L}^{-1}_{s}$.
 \end{itemize}

 \section{Case of general distributions}
\label{sec:case_gnrl}

In the case of Model 1 for general distributions, we can easily obtain the following.
\begin{proposition}
  Assume that $F_{X_i} = F_X$, $F_{Y_i} = F_Y$, $G_{W_i}=G_W$, $G_{V_i}=G_V$ for any $i$, and
  $E(X_i) = E(X) < \infty$, $E(Y_i) = E(Y) < \infty$ for any $i$.
  Then the following hold:
  \begin{enumerate}[label=(\roman*)]
  \item The probability that the unit does not fail at time $t$ is $(1 - F_X (t)) (1 - F_Y (t))$;
  \item FPTF is $F_X (t) + F_Y (t) - F_X (t) F_Y(t)$; 
  \item The mean of FPTF is given by
    \begin{equation}
      \label{eq:1}
      \int_0^{\infty} (1 - F_X (t)) (1 - F_Y (t)) \, dt.
    \end{equation}
  \end{enumerate}
\end{proposition}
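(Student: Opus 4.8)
The plan is to recognize that in the catastrophic Model~1 the unit fails the instant the first shock from \emph{either} process arrives, so that the first passage time to failure is simply the minimum of the two first interarrival times, $T := \min(X_1, Y_1)$. All three claims then reduce to elementary facts about this minimum, the essential ingredient being the independence of the two stochastic processes supplied by the hypotheses.

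First I would establish (i). The unit survives past time $t$ precisely when neither process has produced an event in $[0,t]$, i.e.\ when $X_1 > t$ and $Y_1 > t$ simultaneously. Since $N_1(t)$ and $N_2(t)$ are assumed independent, the first interarrival times $X_1$ and $Y_1$ are independent, so that
\begin{equation*}
  P(T > t) = P(X_1 > t)\,P(Y_1 > t) = (1 - F_X(t))\,(1 - F_Y(t)),
\end{equation*}
using $F_{X_i} = F_X$ and $F_{Y_i} = F_Y$. This is exactly the survival probability asserted in (i). Claim (ii) is then immediate by complementation: the FPTF distribution function equals $P(T \le t) = 1 - P(T > t)$, and expanding the product $(1 - F_X(t))(1 - F_Y(t))$ yields $F_X(t) + F_Y(t) - F_X(t)F_Y(t)$.

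For (iii) I would invoke the tail-integral representation of the expectation of a nonnegative random variable, $E(T) = \int_0^\infty P(T > t)\,dt$, and substitute the survival function obtained in (i) to recover~\eqref{eq:1}. The single point requiring care, which is the only candidate for a genuine obstacle, is justifying that this mean is finite so that the tail-integral formula applies; but since $T = \min(X_1, Y_1) \le X_1$ we have $E(T) \le E(X) < \infty$ by assumption, so the integral converges and the representation is legitimate. No further difficulty is anticipated, as the entire argument rests solely on identifying $T$ with the minimum and on the independence furnished by the preliminaries.
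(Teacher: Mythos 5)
Your proposal is correct and takes essentially the approach the paper intends: the paper states this proposition without proof (``we can easily obtain''), but its proof of the analogous integer-gamma case in Section~5.1 rests on exactly your decomposition, namely that the no-failure probability equals $P(X_1 > t)\,P(Y_1 > t)$ by independence, with (ii) following by complementation and (iii) by the tail-integral formula for the mean. Your additional remark that $E(T) \le E(X) < \infty$ justifies finiteness is a sound (and slightly more careful) touch than the paper offers.
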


In the case of Model 2 for general distributions, Mohri and Takeshita~\cite{MT2018} showed the following.
\begin{proposition}(Section 3 in Mohri and Takeshita~\cite{MT2018})
  \label{prop:model2_gnrl}
  Assume that $F_{X_i} = F_X$, $F_{Y_i} = F_Y$, $G_{W_i}=G_W$, and $G_{V_i}=G_V$ for any $i$.
  Then the following hold:
  \begin{enumerate}[label=(\roman*)]
  \item The distribution function of the sum of damages is given by 
    \begin{multline}
      P(U(t) \leq x) =    \sum_{k_1=0}^{\infty} \sum_{k_2=0}^{\infty}
                   \left( G^{(k_1)}_W \ast G^{(k_2)}_V\right) (x)
   \left(F^{(k_1)}_X (t) - F^{(k_1 + 1)}_X (t)\right)
     \left(F^{(k_2)}_Y (t) - F^{(k_2 + 1)}_Y (t)\right);  
   \end{multline}
 \item The mean of (i) is given by 
   \begin{multline}
       E(U) = -  \left[\sum_{k_1=0}^{\infty} k_1
  \left.\frac{d\mathcal{L}_x (G_W)[s]}{ds}\right|_{s=0}  \left(F_X^{(k_1)}(t) - F_X^{(k_1+1)}(t)\right)\right]\\
 -  \left[\sum_{k_2=0}^{\infty} k_2 
   \left.\frac{d \mathcal{L}_x (G_V)[s]}{ds}\right|_{s=0}   \left(F_Y^{(k_2)}(t) - F_Y^{(k_2+1)}(t)\right)\right],
   \end{multline}
 \item FPTF is given by 
   \begin{multline}
      P(Y \leq t) = 1 - \sum_{k_1=0}^{\infty} \sum_{k_2=0}^{\infty}
  \left(G^{(k_1)}_W * G^{(k_2)}_V\right)(K)
   \left(F_X^{(k_1)}(t) - F_X^{(k_1+1)}(t)\right)
  \left(F_Y^{(k_2)}(t) - F_Y^{(k_2+1)}(t)\right),
\end{multline}
\end{enumerate}
\end{proposition}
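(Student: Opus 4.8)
The plan is to derive all three parts from one renewal-counting identity together with the conditional independence structure of the model. The first step is to observe that for a renewal process with interarrival distribution $F$, the event $\{N(t) \ge k\}$ is the event that the $k$-th arrival occurs by time $t$, that is $\{\sum_{i=1}^{k} X_i \le t\}$, whose probability is the $k$-fold convolution $F^{(k)}(t)$. Taking the difference of consecutive terms gives
\begin{equation*}
  P(N(t) = k) = F^{(k)}(t) - F^{(k+1)}(t).
\end{equation*}
Applying this to $N_1(t)$ with $F_X$ and to $N_2(t)$ with $F_Y$ produces exactly the two bracketed increment factors that recur throughout the statement.

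For part (i) I would condition on $\{N_1(t) = k_1,\, N_2(t) = k_2\}$. Given these counts, $\mathcal{W}(t)$ is a sum of $k_1$ i.i.d.\ copies of $W$ and $\mathcal{V}(t)$ a sum of $k_2$ i.i.d.\ copies of $V$, so their conditional laws are the convolution powers $G_W^{(k_1)}$ and $G_V^{(k_2)}$; because the two processes are independent, $U(t) = \mathcal{W}(t) + \mathcal{V}(t)$ has conditional distribution $G_W^{(k_1)} \ast G_V^{(k_2)}$. Summing this conditional law against the joint distribution of $(N_1(t), N_2(t))$---which factorises by the assumed independence of $N_1$ and $N_2$---and inserting the increment identity above yields the double series in (i). Part (iii) is then immediate: the unit has failed by time $t$ precisely when the accumulated damage exceeds the threshold, so $\{\mathrm{FPTF} \le t\} = \{U(t) > K\}$ and the distribution function of the FPTF equals $1 - P(U(t) \le K)$, which is (i) evaluated at $x = K$.

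For part (ii) I would pass to the Laplace--Stieltjes transform in $x$. Writing $g_W(s) = \mathcal{L}_x(G_W)[s]$ and $g_V(s) = \mathcal{L}_x(G_V)[s]$, the transform of a convolution power is the corresponding power of the transform, so transforming the law in (i) gives $\sum_{k_1, k_2} g_W(s)^{k_1} g_V(s)^{k_2}\, P(N_1(t) = k_1)\, P(N_2(t) = k_2)$. Differentiating once in $s$ and evaluating at $s = 0$, where $g_W(0) = g_V(0) = 1$ so that in each term one of the two factor-sums collapses to $1$, and then using $E(U) = -\frac{d}{ds}\, \mathcal{L}_x\bigl(P(U(t) \le \cdot)\bigr)[s]\big|_{s=0}$, reproduces the two sums in (ii) with $g_W'(0)$ and $g_V'(0)$ appearing as the derivative factors.

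The step I expect to be the main obstacle is the analytic bookkeeping in part (ii): justifying the termwise differentiation and the interchange of the infinite double summation with the transform and with $d/ds$. This needs a dominated- or uniform-convergence argument resting on the (implicitly assumed) finiteness of $E(W)$, $E(V)$, $E(N_1(t))$ and $E(N_2(t))$; once those bounds are in place the manipulation is routine, while parts (i) and (iii) amount to bookkeeping around the renewal identity.
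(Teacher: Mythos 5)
Your proof is correct, and it is the standard argument for this result: the renewal identity $P(N(t)=k)=F^{(k)}(t)-F^{(k+1)}(t)$, conditioning on the pair of counts (using independence of the magnitudes from the arrival times and of the two processes from each other), and the LS-transform differentiation at $s=0$ for the mean. There is nothing in this paper to compare it against, however: the proposition is quoted without proof from Mohri and Takeshita~\cite{MT2018}, so the paper's ``own proof'' lives in that reference; your route matches how the present paper actually uses the result downstream (in Proposition~\ref{thm:MT2018_exp_mag_i_gamma} the authors specialize (i) and compute the mean exactly by your termwise differentiation of the transform at $s=0$, without addressing the interchange-of-limits issue you rightly flag as the only nontrivial analytic point).
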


\section{Case that the interarrival times between shocks (i.e., $X_i$ and $Y_i$) follow exponential distributions}
\label{sec:interval_case_exp}
 In this section, we assume that $X_i$ and $Y_i$ follow exponential distributions.

 \subsection{Case that the magnitudes of the events (i.e., $W_i$ and $V_i$) follow exponential distributions}

  Model 2 can be applied, with the assumption that $\mu_1 = \mu_2 := \mu$, [i.e., the existing results for damage models (e.g., Nakagawa and Osaki~\cite{NO1974}, Cox3\cite{Cox1962})], because exponential distributions have the reproductive property.
  More precisely, we should analyze a new random variable $\{X'_i\}$ using the distribution function $F_{X'_i}(t):= 1 - \exp (- (\lambda_1 + \lambda_2) t)$, instead of $\{X_i\}$ and $\{Y_i\}$; and analyze $\{W'_i\}$ using the distribution function $G_{W'_i}(x) := 1 - \exp (- \mu x)$, instead of $\{W_i\}$ and $\{V_i\}$.

   \subsection{Case that magnitudes of events (i.e., $W_i$ and $V_i$) follow integer gamma distributions}

In this subsection, we analyze Model 2 for the cases where $W_i$ and $V_i$ follow integer gamma distributions.
Note that exponential distributions are special cases of integer gamma distributions; see Remark~\ref{rem:case_m=1}.
In general, $\lambda_1 \neq \lambda_2 \neq \mu_1 \neq \mu_2$ and $m_1, m_2 \in \mathbb{N}$, the distribution function of the sum of damage, its means, and the FPTF are given by the following.
\begin{proposition}
  \label{thm:MT2018_exp_mag_i_gamma}
  Assume that $F_{X_i}(t) = 1 - \exp (- \lambda_1 t)$, $F_{Y_i}(t) = 1 - \exp (- \lambda_2 t)$,
  \begin{equation}
    \label{eq:2}
    G_{W_i}(x) = 1 - \exp (- \mu_1 x) \sum_{l_1=0}^{m_1-1} \frac{(\mu_1 x)^{l_1}}{l_1!} \quad \text{, and} \quad 
   G_{V_i}(x) = 1 - \exp (- \mu_2 x) \sum_{l_2=0}^{m_2-1} \frac{(\mu_2 x)^{l_2}}{l_2!},  
  \end{equation}
  for any $i$, where $\lambda_1, \lambda_2, \mu_1, \mu_2 >0$ and generally $\lambda_1 \neq \lambda_2 \neq \mu_1 \neq \mu_2$.
  Also, we let $\lambda:= \lambda_1 + \lambda_2$ and $m:= m_1 + m_2$.
  Then the following hold:
    \begin{enumerate}[label=(\roman*)]
  \item The distribution function of the sum of damage is given by 
    \begin{multline}
         P(U(t) \leq x) = \sum_{k_1=0}^{\infty} \sum_{k_2=0}^{\infty}
    \mathcal{L}_s^{-1} \left[   \left(\frac{\mu_1}{s + \mu_1}\right)^{m_1 k_1}
		      \left(\frac{\mu_2}{s + \mu_2}\right)^{m_2 k_2}\right](x)
   \frac{(\lambda_1 t)^{k_1}}{k_1 !}
     \frac{(\lambda_2 t)^{k_2}}{k_2 !} e^{-\lambda t};
   \end{multline}
 \item The mean of (i) is given by $E(U(t)) = m_1 \lambda_1 t / \mu_1 + m_2 \lambda_2 t / \mu_2$;
 \item FPTF is given by 
   \begin{multline}
         P(Y \leq t) = P(U(t) > K)\\
         = 1 -  \sum_{k_1=0}^{\infty} \sum_{k_2=0}^{\infty}
    \mathcal{L}_s^{-1} \left[   \left(\frac{\mu_1}{s + \mu_1}\right)^{m_1 k_1}
      \left(\frac{\mu_2}{s + \mu_2}\right)^{m_2 k_2}\right](K)
   \frac{(\lambda_1 t)^{k_1}}{k_1 !}
     \frac{(\lambda_2 t)^{k_2}}{k_2 !} e^{-\lambda t}.
\end{multline}
\end{enumerate}
\end{proposition}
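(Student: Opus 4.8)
The plan is to derive all three parts by specializing Proposition~\ref{prop:model2_gnrl} to the present distributional assumptions, so that no new machinery is needed beyond identifying two standard transforms.

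First I would handle the interarrival factors. Because $F_X$ and $F_Y$ are exponential, the counting processes $N_1(t)$ and $N_2(t)$ are Poisson with rates $\lambda_1$ and $\lambda_2$. The quantity $F_X^{(k_1)}(t)-F_X^{(k_1+1)}(t)$ appearing in Proposition~\ref{prop:model2_gnrl} is exactly $P(N_1(t)=k_1)$, the probability that the $k_1$-th arrival of process~1 has occurred by $t$ but the $(k_1+1)$-th has not; by the standard relation between the Erlang and Poisson distributions this equals $e^{-\lambda_1 t}(\lambda_1 t)^{k_1}/k_1!$, and similarly for process~2. Multiplying the two factors and writing $\lambda=\lambda_1+\lambda_2$ yields the weight $\frac{(\lambda_1 t)^{k_1}}{k_1!}\frac{(\lambda_2 t)^{k_2}}{k_2!}e^{-\lambda t}$ that appears in (i) and (iii).

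Next I would treat the magnitude factor $\bigl(G_W^{(k_1)}\ast G_V^{(k_2)}\bigr)(x)$. The LS transform in $x$ of the integer gamma distribution $G_W$ is $\bigl(\mu_1/(s+\mu_1)\bigr)^{m_1}$, and since a convolution of distribution functions transforms to a product, $G_W^{(k_1)}$ has transform $\bigl(\mu_1/(s+\mu_1)\bigr)^{m_1 k_1}$ and likewise $G_V^{(k_2)}$ has transform $\bigl(\mu_2/(s+\mu_2)\bigr)^{m_2 k_2}$. Hence the transform of $G_W^{(k_1)}\ast G_V^{(k_2)}$ is the product of these, and inverting gives $\bigl(G_W^{(k_1)}\ast G_V^{(k_2)}\bigr)(x)=\mathcal{L}_s^{-1}\!\bigl[(\mu_1/(s+\mu_1))^{m_1 k_1}(\mu_2/(s+\mu_2))^{m_2 k_2}\bigr](x)$. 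Substituting this together with the Poisson weights into Proposition~\ref{prop:model2_gnrl}(i) proves (i); setting $x=K$ and using $\mathrm{FPTF}=P(U(t)>K)=1-P(U(t)\le K)$ proves (iii).

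For the mean (ii) I would avoid the series and use the compound structure directly: $E(U(t))=E(\mathcal{W}(t))+E(\mathcal{V}(t))$, and conditioning on the number of shocks gives $E(\mathcal{W}(t))=E(N_1(t))\,E(W_1)=\lambda_1 t\cdot(m_1/\mu_1)$, since the mean of an integer gamma with shape $m_1$ and rate $\mu_1$ is $m_1/\mu_1$; the same computation for process~2 gives $E(\mathcal{V}(t))=\lambda_2 t\cdot(m_2/\mu_2)$, and adding yields the stated formula. Equivalently one recovers this from Proposition~\ref{prop:model2_gnrl}(ii), using $-\frac{d}{ds}\mathcal{L}_x(G_W)[s]\big|_{s=0}=m_1/\mu_1$ and $\sum_{k_1}k_1 P(N_1(t)=k_1)=\lambda_1 t$. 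The whole argument is essentially bookkeeping; the only point needing care is the transform identity for the magnitude factor, where the convolution of two gammas with distinct rates $\mu_1\neq\mu_2$ has no elementary closed form, which is precisely why the result is left as an unevaluated inverse LS transform rather than a finite sum.
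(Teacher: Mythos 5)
Your proposal is correct, and for parts (i) and (iii) it is essentially the paper's own argument: identify $\mathcal{L}_x(G_{W_i})[s]=(\mu_1/(s+\mu_1))^{m_1}$ and $\mathcal{L}_x(G_{V_i})[s]=(\mu_2/(s+\mu_2))^{m_2}$, turn the $k$-fold convolutions into powers of these transforms, plug into Proposition~\ref{prop:model2_gnrl}(i), and get (iii) by evaluating at $x=K$ and complementing. (You are more explicit than the paper in noting that $F_X^{(k_1)}(t)-F_X^{(k_1+1)}(t)$ is the Poisson probability $e^{-\lambda_1 t}(\lambda_1 t)^{k_1}/k_1!$; the paper absorbs this step silently into the citation of Proposition~\ref{prop:model2_gnrl}.) Where you genuinely diverge is part (ii): the paper computes $E(U(t))=-\frac{d}{ds}\mathcal{L}_x\bigl(P(U(t)\le x)\bigr)[s]\big|_{s=0}$ term by term on the double series from (i), obtaining $\sum_{k_1}\sum_{k_2}\bigl(m_1k_1/\mu_1+m_2k_2/\mu_2\bigr)$ times the Poisson weights, and then resums the two series to reach $m_1\lambda_1 t/\mu_1+m_2\lambda_2 t/\mu_2$. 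You instead use the compound-process structure directly, $E(\mathcal{W}(t))=E(N_1(t))E(W_1)$ (Wald's identity for a Poisson number of i.i.d.\ summands), plus linearity. Your route is shorter and avoids both the termwise differentiation (whose interchange with the infinite sum the paper does not justify either) and the series manipulation; the paper's route has the merit of staying entirely within the LS-transform machinery of Proposition~\ref{prop:model2_gnrl} and of serving as a consistency check on formula (i). Both are valid derivations of the same result.
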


\begin{proof}
  Since (iii) can be immediately obtained from (i), we prove only (i) and (ii).

  From 
  \begin{equation}
    \mathcal{L}_x\left( G_{W_i} \right)[s] = \left(\frac{\mu_1}{s + \mu_1}\right)^{m_1} \quad \text{and} \quad
    \mathcal{L}_x\left( G_{V_i} \right)[s] = \left(\frac{\mu_2}{s + \mu_2}\right)^{m_2}
  \end{equation} 
  and Proposition~\ref{prop:model2_gnrl} (i), we obtain
 	\begin{multline}
         P(U(t) \leq x) = \sum_{k_1=0}^{\infty} \sum_{k_2=0}^{\infty}
    \mathcal{L}_s^{-1} \left[   \left(\frac{\mu_1}{s + \mu_1}\right)^{m_1 k_1}
		      \left(\frac{\mu_2}{s + \mu_2}\right)^{m_2 k_2}\right](x)
   \frac{(\lambda_1 t)^{k_1}}{k_1 !}
     \frac{(\lambda_2 t)^{k_2}}{k_2 !} e^{-\lambda t}.
   \end{multline}
   Also, from (i), we have
  \begin{align}
    E(U(t)) &= - \left.\frac{d \mathcal{L}_x \left( P(U(t) < x) \right)[s]}{ds}\right|_{s=0}\notag\\
            &= \sum_{k_1=0}^{\infty} \sum_{k_2=0}^{\infty}
              \left(\frac{\mu_1 k_1}{s + \mu_1} + \frac{\mu_2 k_2}{s + \mu_2}\right)
              \left.\left(\frac{\mu_1}{s + \mu_1}\right)^{m_1 k_1} \left(\frac{\mu_2}{s + \mu_2}\right)^{m_2 k_2}\right|_{s=0}
             \frac{(\lambda_1 t)^{k_1}}{k_1 !} \frac{(\lambda_2 t)^{k_2}}{k_2 !} e^{-\lambda t}\notag\\
            &= \sum_{k_1=0}^{\infty} \sum_{k_2=0}^{\infty} \left(\frac{m_1 k_1}{\mu_1} + \frac{m_2 k_2}{\mu_2}\right)
              \frac{(\lambda_1 t)^{k_1}}{k_1 !} \frac{(\lambda_2 t)^{k_2}}{k_2 !} e^{-\lambda t}\notag\\
            &=\left(\frac{m_1 \lambda_1 t}{\mu_1}\sum_{k_1=1}^{\infty}\frac{(\lambda_1 t)^{k_1 - 1}}{(k_1-1)!}
              \sum_{k_2=0}^{\infty}\frac{(\lambda_2 t)^{k_2}}{k_2 !}
            +\frac{m_2 \lambda_2 t}{\mu_2}\sum_{k_1=0}^{\infty}\frac{(\lambda_1 t)^{k_1}}{k_1!}
              \sum_{k_2=1}^{\infty}\frac{(\lambda_2 t)^{k_2 - 1}}{(k_2 -1) !} \right) e^{-\lambda t}\notag\\
            &= \frac{m_1 \lambda_1 t}{\mu_1} + \frac{m_2 \lambda_2 t}{\mu_2},
  \end{align}
  which is the conclusion of (ii).
\end{proof}

\begin{remark}
  \label{rem:case_m=1}
  When $m_1 = m_2 =1$, Proposition~\ref{thm:MT2018_exp_mag_i_gamma} shows the results for exponential distributions.
\end{remark}

\begin{remark}
  Proposition~\ref{thm:MT2018_exp_mag_i_gamma} (i) includes the inverse LS transform and does not provides explicit formula.
  Although the gamma distributions do not have the reproductive property when $\mu_1 \neq \mu_2$, Coelho~\cite{Coelho1998} gave the probability density function of sum of integer gamma distributions for $\mu_1 \neq \mu_2$; and using Corollary 1 in Coelho~\cite{Coelho1998}, we can obtain an explicit formula of (i).
\end{remark}

  \section{Case that magnitudes of events can be ignored}

In this section, we analyze Model 1 for the case where $X_i$ and $Y_i$ follow integer gamma and Weibull distributions.
Note that this section this section is not concerned with the distributions followed by $V_i$ and $W_i$.

\subsection{Case that interarrival times between shocks (i.e., $X_i$ and $Y_i$) follow integer gamma distributions}
\label{sec:case3}

In this subsection, we analyze Model 1 for the cases where $W_i$ and $V_i$ follow integer gamma distributions.

\begin{proposition}
  Assume that
  \begin{equation}
      F_{W_i}(t) = 1 - \exp (- \lambda_1 t) \sum_{l_1=0}^{m_1-1} \frac{(\lambda_1 x)^{l_1}}{l_1!} \quad \text{, and} \quad
      F_{V_i}(t) = 1 - \exp (- \lambda_2 t) \sum_{l_2=0}^{m_2-1} \frac{(\lambda_1 x)^{l_2}}{l_2!},
  \end{equation}
  where $k_1, k_2 \in \mathbb{N}$, $\lambda_1, \lambda_2 > 0$ and generally $m_1 \neq m_2$ and $\lambda_1 \neq \lambda_2$.
  Also we let $\lambda := \lambda_1 + \lambda_2$.
  Then the following hold:
  \begin{enumerate}[label=(\roman*)]
  \item The probability that the unit does not fail at time $t$ is given by
    \begin{equation}
      \exp (- \lambda t )
      \left(\sum_{l_1=0}^{m_1-1} \frac{(\lambda_1 t)^{l_1}}{l_1 !}\right)
      \left(\sum_{l_2=0}^{m_2-1} \frac{(\lambda_2 t)^{l_2}}{l_2 !}\right);
    \end{equation}
  \item FPTF is given by
    \begin{equation}
      1 - \exp (- \lambda t )
      \left(\sum_{l_1=0}^{m_1-1} \frac{(\lambda_1 t)^{l_1}}{l_1 !}\right)
      \left(\sum_{l_2=0}^{m_2-1} \frac{(\lambda_2 t)^{l_2}}{l_2 !}\right);
    \end{equation}
  \item The means of FTPF are given by
    \begin{equation}
      \left\{
        \begin{array}{ll}
          \displaystyle \sum_{l_1=0}^{m_1-1} \frac{\lambda_1^{l_1}}{\lambda^{l_1+1}}& \text{if } m_2 = 1,\\[4mm]
          \displaystyle \sum_{n=0}^{2 (m-1)} \sum_{l=0}^n \binom{n}{l}^2
          \frac{\lambda_1^{n-l} \lambda_2^l}{\lambda^{n+1}}& \text{if } m:= m_1 = m_2.\\
        \end{array}
      \right.
    \end{equation}
  \end{enumerate}
\end{proposition}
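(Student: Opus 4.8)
The plan is to obtain (i) and (ii) directly from Proposition~1 and to derive (iii) by evaluating the integral in \eqref{eq:1}.

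For (i), the integer gamma (Erlang) law of shape $m_j$ and rate $\lambda_j$ has survival function $1-F=\exp(-\lambda_j t)\sum_{l=0}^{m_j-1}(\lambda_j t)^l/l!$. Forming the product $(1-F_X)(1-F_Y)$ and factoring out the common exponential $\exp(-(\lambda_1+\lambda_2)t)=\exp(-\lambda t)$ yields exactly the displayed product of truncated series, and (ii) is its complement $1-(1-F_X)(1-F_Y)$; both are immediate.

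For (iii) I would substitute this product into \eqref{eq:1}, expand, and integrate term by term using $\int_0^{\infty}t^{k}\exp(-\lambda t)\,dt=k!/\lambda^{k+1}$. Since $(l_1+l_2)!/(l_1!\,l_2!)=\binom{l_1+l_2}{l_1}$, this produces the double sum
\begin{equation*}
\sum_{l_1=0}^{m_1-1}\sum_{l_2=0}^{m_2-1}\binom{l_1+l_2}{l_1}\frac{\lambda_1^{l_1}\lambda_2^{l_2}}{\lambda^{l_1+l_2+1}}.
\end{equation*}
When $m_2=1$ only the term $l_2=0$ survives and $\binom{l_1}{l_1}=1$, so the expression collapses to $\sum_{l_1=0}^{m_1-1}\lambda_1^{l_1}/\lambda^{l_1+1}$, giving the first case at once.

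The substantive step is the symmetric case $m_1=m_2=m$, which I would treat by reindexing the double sum according to the total degree $n=l_1+l_2\in\{0,\dots,2(m-1)\}$ and summing over the admissible $l$ on each diagonal. For $n\le m-1$ the admissible range is the full $0\le l\le n$, so the binomial theorem collapses that diagonal, $\sum_{l}\binom{n}{l}\lambda_1^{n-l}\lambda_2^{l}=(\lambda_1+\lambda_2)^{n}=\lambda^{n}$, contributing $\lambda^{-1}$. The main obstacle I anticipate is the boundary diagonals $n\ge m$, where the range of $l$ is truncated to $n-m+1\le l\le m-1$ and the binomial theorem no longer applies; carrying out this reindexing carefully and reconciling the truncated diagonals with the displayed closed form is the crux of the argument.
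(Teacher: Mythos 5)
Your strategy for (i) and (ii) is exactly what the paper does, and your route to (iii) --- substituting the survival product into the integral of Proposition~1(iii) and integrating term by term with $\int_0^\infty t^k e^{-\lambda t}\,dt = k!/\lambda^{k+1}$ --- is sound, and in fact more elementary than the paper's proof, which instead forms the Laplace--Stieltjes transform $\Phi^*(s)$ of the FPTF distribution and computes $E(T) = -\,d\Phi^*[s]/ds\,|_{s=0}$. Your double sum
\begin{equation}
E(T)=\sum_{l_1=0}^{m_1-1}\sum_{l_2=0}^{m_2-1}\binom{l_1+l_2}{l_1}\frac{\lambda_1^{l_1}\lambda_2^{l_2}}{\lambda^{l_1+l_2+1}}
\end{equation}
is correct, and your collapse of it in the case $m_2=1$ to $\sum_{l_1}\lambda_1^{l_1}/\lambda^{l_1+1}$ agrees with the proposition.

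However, the ``crux'' you postpone --- reconciling the truncated diagonals $n\ge m$ with the displayed closed form when $m_1=m_2=m$ --- cannot be carried out, because the displayed closed form is wrong. Your expression, reindexed over $n=l_1+l_2$, is $\sum_{n=0}^{2(m-1)}\sum_{l}\binom{n}{l}\lambda_1^{n-l}\lambda_2^{l}/\lambda^{n+1}$ with $l$ restricted to $\max(0,n-m+1)\le l\le\min(n,m-1)$: the binomial coefficient enters to the \emph{first} power and the range of $l$ is genuinely truncated, whereas the proposition has $\binom{n}{l}^2$ and the full range $0\le l\le n$. These disagree already at $m=2$, $\lambda_1=\lambda_2=1$: directly, $E(T)=\int_0^\infty e^{-2t}(1+t)^2\,dt=5/4$, which your formula reproduces ($\tfrac12+\tfrac14+\tfrac14+\tfrac28=\tfrac54$), while the proposition's formula gives $\tfrac12+\tfrac{1+1}{4}+\tfrac{1+4+1}{8}=\tfrac74$. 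The defect originates in the paper's own proof, at the step rewriting the product $\bigl(\sum_{l_1=0}^{m-1}\lambda_1^{l_1}t^{l_1}/l_1!\bigr)\bigl(\sum_{l_2=0}^{m-1}\lambda_2^{l_2}t^{l_2}/l_2!\bigr)$ as a sum over $n$ and $0\le l\le n$ with coefficient $\binom{n}{l}^2\lambda_1^{n-l}\lambda_2^l/((n-l)!\,l!)$: the factor $\binom{n}{l}^2$ is spurious, and letting $l$ run over all of $\{0,\dots,n\}$ silently includes terms with $l_1\ge m$ or $l_2\ge m$ that are absent from the product. So do not try to force your diagonals into the stated form; the correct conclusion of case two is your truncated sum, equivalently $m/\lambda+\sum_{n=m}^{2(m-1)}\sum_{l=n-m+1}^{m-1}\binom{n}{l}\lambda_1^{n-l}\lambda_2^{l}/\lambda^{n+1}$, and the proposition's second case (and the paper's proof of it) needs to be corrected accordingly.
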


\begin{proof}
  Since $P(Z(t)=0)$ expresses the probability that the unit does not fail at time $t$ and $P(Z(t) = 0) = P(X_1 >t)P(Y_1 >t)$, we should derive $P(X_1 >t)$ and $P(Y_1 > t)$.
  From
  \begin{align}
    P(X_1  \leq t) &= 1 - \exp (- \lambda_1 t) \sum_{l_1=0}^{m_1-1} \frac{(\lambda_1 t)^{l_1}}{l_1!},\\
        P(Y_1  \leq t) &= 1 - \exp (- \lambda_2 t) \sum_{l_2=0}^{m_2-1} \frac{(\lambda_2 t)^{l_2}}{l_2!},
  \end{align}
  we obtain (i); and (ii) can be immediately obtained from (i).

  Let $T$ be FPTF, that is, $T:= \min\{t \, | \, Z(t) > 0\}$, and $\Phi(t):=P(T \leq t)$.
  Then $E(T) = - d\Phi^*[s] / ds |_{s=0}$, where $\Phi^*$ represents the Laplace-Steitltjes transform of $\Phi$.

  When $m_2=1$, we have
  \begin{align}
    \Phi(t) &= 1 - \sum_{l_1=0}^{m_1-1} \left(\frac{\lambda_1^{l_1}}{l_1!} t^{l_1} e^{-\lambda t}\right),\\
    \Phi^*(s) &= 1 - \sum_{l_1=0}^{m_1-1} \left(\frac{\lambda_1^{l_1}}{l_1!} \frac{s l_1 !}{(s + \lambda)^{l_1+1}}\right).
  \end{align}
  Then we obtain
  \begin{equation}
    E(T) = \left.\sum_{l_1=0}^{m_1-1} \left(\frac{\lambda_1^{l_1}}{(s + \lambda)^{l_1+1}}
      - \frac{(l_1+1) \lambda_1^{l_1} s}{(s + \lambda)^{l_1+2}}\right) \right|_{s=0}
  = \sum_{l_1 = 0}^{m_1 - 1} \frac{\lambda_1^{l_1}}{\lambda^{l_1+1}}.
\end{equation}

  On the other hand, when $m_1=m_2=:m$, we get
  \begin{align}
    \Phi(t) &= 1 - \left(\sum_{l_1=0}^{m_1-1}\frac{\lambda_1^{l_1}}{l_1!} t^{l_1}\right)
              \left(\sum_{l_2=0}^{m_2-1}\frac{\lambda_2^{l_2}}{l_2!} t^{l_2}\right) e^{-\lambda t}\notag\\
            &= 1 - \sum_{n=0}^{2(m-1)} \sum_{l=0}^n
              \left( \binom{n}{l}^2 \frac{\lambda_1^{n-l} \lambda_2^l}{(n-l)! l!} t^n e^{-\lambda t} \right),\\
    \Phi^*(s) &= 1 - \sum_{n=0}^{2(m-1)} \sum_{l=0}^n
              \left( \binom{n}{l}^2 \frac{\lambda_1^{n-l} \lambda_2^l}{(n-l)! l!} \frac{n!}{(s+\lambda)^{n+1}} \right).
  \end{align}
   Then we obtain
  \begin{align}
    E(T) &= \left. \sum_{n=0}^{2(m-1)} \sum_{l=0}^n \binom{n}{l}^2 \lambda_1^{n-l} \lambda_2^l
           \left(\frac{1}{(s + \lambda)^{n+1}} - \frac{(n+1)s}{(s + \lambda)^{n}}\right) \right|_{s=0}\notag\\
    &= \sum_{n=0}^{2(m-1)} \sum_{l=0}^n \binom{n}{l}^2
      \frac{\lambda_1^{n-l} \lambda_2^l}{\lambda^{n+1}}.
  \end{align}
  \mbox{}
\end{proof}

\subsection{Case that interarrival times between shocks (i.e., $X_i$ and $Y_i$) follow Weibull distributions}

\begin{proposition}
  \label{prop:event_weibull}
  Assume that  $F_{X_i}(t):= 1 - \exp (- (t / \beta_1)^{\alpha_1} )$ and $F_{Y_i}(t):= 1 - \exp (- (t / \beta_2)^{\alpha_2} )$, where $\alpha_1, \alpha_2, \beta_1, \beta_2 >0$, and generally $\alpha_1 \neq \alpha_2$ and $\beta_1 \neq \beta_2$
  Then the following hold:
  \begin{enumerate}[label=(\roman*)]
  \item The probability that the unit does not fail at time $t$ is given by
    \begin{equation}
      \exp \left( - \left(\frac{t}{\beta_1}\right)^{\alpha_1} - \left(\frac{t}{\beta_2}\right)^{\alpha_2} \right);
        \label{eq:event_weibull-1}
    \end{equation}
  \item FPTF is given by
    \begin{equation}
      1 -  \exp \left( - \left(\frac{t}{\beta_1}\right)^{\alpha_1} - \left(\frac{t}{\beta_2}\right)^{\alpha_2} \right);
    \end{equation}
  \item If $\alpha_1 = \alpha_2 =: \alpha$, the mean of FTPF is given by 
    \begin{equation}
      \left(\frac{1}{\beta_1^\alpha} +  \frac{1}{\beta_2^\alpha} \right)^{-1 / \alpha}
      \Gamma\left(1 + \frac{1}{\alpha}\right).
    \end{equation}
  \end{enumerate}
\end{proposition}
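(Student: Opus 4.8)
The plan is to reduce all three parts to the general Model~1 results already established in the first Proposition of Section~3, and then carry out a single elementary integral for part (iii). The entire proposition is a specialization of that earlier statement to the Weibull case, so almost all of the content is bookkeeping.

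For parts (i) and (ii), I would simply substitute the two Weibull survival functions into the general formulas. Since $1 - F_{X_i}(t) = \exp(-(t/\beta_1)^{\alpha_1})$ and $1 - F_{Y_i}(t) = \exp(-(t/\beta_2)^{\alpha_2})$, the probability of not failing by time $t$ is their product, which is exactly \eqref{eq:event_weibull-1}; and the FPTF is one minus this product. Both follow with no computation beyond multiplying the two exponentials and combining the exponents.

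For part (iii), the only real work is to evaluate the mean-FPTF integral \eqref{eq:1}, namely $\int_0^\infty \exp(-(t/\beta_1)^{\alpha} - (t/\beta_2)^{\alpha})\,dt$, under the assumption $\alpha_1 = \alpha_2 = \alpha$. First I would combine the two exponents, writing the integrand as $\exp(-c\,t^{\alpha})$ with $c := \beta_1^{-\alpha} + \beta_2^{-\alpha}$. Then the substitution $u = c\,t^{\alpha}$, giving $t = c^{-1/\alpha} u^{1/\alpha}$ and $dt = \tfrac{1}{\alpha} c^{-1/\alpha} u^{1/\alpha - 1}\,du$, converts the integral into $\tfrac{1}{\alpha} c^{-1/\alpha} \int_0^\infty e^{-u} u^{1/\alpha - 1}\,du = \tfrac{1}{\alpha} c^{-1/\alpha}\,\Gamma(1/\alpha)$. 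Finally I would apply the identity $\tfrac{1}{\alpha}\Gamma(1/\alpha) = \Gamma(1 + 1/\alpha)$ to obtain $c^{-1/\alpha}\,\Gamma(1 + 1/\alpha)$, which is precisely the claimed value.

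The only genuine step is recognizing the Gamma integral after the change of variables; this is routine. The main subtlety, and the reason the hypothesis $\alpha_1 = \alpha_2$ is imposed, is that the two exponents combine into a single power of $t$ only when the shape parameters coincide. If $\alpha_1 \neq \alpha_2$, the integrand is a product of two Weibull tails with incommensurate exponents and admits no closed form in terms of a single Gamma value, so I would not expect part (iii) to extend beyond the stated case.
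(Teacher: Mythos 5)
Your proposal is correct and follows essentially the same route as the paper: parts (i) and (ii) are immediate from the general Model~1 formulas, and part (iii) is the mean-FPTF integral \eqref{eq:1} evaluated with the substitution $s = (1/\beta_1^\alpha + 1/\beta_2^\alpha)\,t^\alpha$, yielding $\frac{1}{\alpha}\Gamma(1/\alpha)$ times the stated prefactor, which is exactly the paper's computation. The only cosmetic difference is that you explicitly invoke the identity $\frac{1}{\alpha}\Gamma(1/\alpha) = \Gamma(1 + 1/\alpha)$ to reach the stated form, a step the paper leaves implicit.
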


\begin{proof}
  Since (i) and (ii) are obvious, we prove only (iii).
  If $\alpha_1 = \alpha_2 =: \alpha >0$, then we have 
  \begin{equation}
    \label{eq:event_weibull-pr1}
    \int_0^\infty \exp \left( - \left(\frac{t}{\beta_1}\right)^{\alpha_1} - \left(\frac{t}{\beta_2}\right)^{\alpha_2} \right) \, dt
    = \int_0^\infty \exp \left( - \left( \frac{1}{\beta_1^\alpha} + \frac{1}{\beta_2^\alpha} \right) t^\alpha\right) \, dt.
  \end{equation}
  By letting $s: =(1/\beta_1^{\alpha} + 1/\beta_2^{\alpha}) t^\alpha$, Eq. \eqref{eq:event_weibull-pr1} becomes
\begin{equation}
  \label{eq:4}
  \frac{1}{\alpha} \left(\frac{1}{\beta_1^\alpha} + \frac{1}{\beta_2^\alpha}\right)^{-1/\alpha} 
  \int_0^\infty \exp (-s) s^{ 1/\alpha -1} \, ds = \left(\frac{1}{\beta_1^\alpha} + \frac{1}{\beta_2^\alpha}\right)^{-1/\alpha} \frac{1}{\alpha} \Gamma\left(\frac{1}{\alpha}\right).
\end{equation}
  \end{proof}

  \section{Conclusions}
\label{sec:conc}

The present study investigated catastrophic failure and cumulative damage models involving two different types of shocks.
As characteristic values of the cumulative damage models, we presented the distribution functions of the sum of damage, its mean, and the first passage of time to failure, for the case where the magnitudes of the events followed general and integer gamma distributions, under the assumption that the interarrival times between shocks followed exponential distributions.
Regarding the catastrophic failure models, we showed the probability that the unit did not fail at time t and the first passage of time to failure, as well as their respective means, for the case where the interarrival times between shocks followed general, integer gamma, and Weibull distributions.
Numerous previous studies were concerned with exponential distributions, but their results essentially could not be discussed in terms of two types of stochastic process due to the reproductive property; and many real-world cases involve combinations of such process types.
Investigations of general gamma and more general distributions would be needed to analyze such real-world examples, and thus represent interesting future challenges, but the present study offers a first step toward such investigations.

\section*{Acknowledgements}
The authors are deeply grateful to Prof.\ Dr.\ Toshio Nakagawa.
Without his guidance and persistent help, this paper would not have been possible.

\end{document}